\newtheorem{theorem}{Theorem}
\newtheorem{lemma}{Lemma}
\newtheorem{proposition}{Proposition}
\newcommand{\sty}{\displaystyle}
\newcommand{\legendre}[2]{( #1 | #2 )}
\begin{document}

%\title{Note on an Additive Characterization of Fibers \\
  %  of the Quadratic Character on $\mathbb F^*_{p^m}$
\title{On a problem  of Perron 
     }
     
\author{Michele Elia}

\date{}

\author{Michele Elia\thanks{The author is
 with the Politecnico di Torino, I-10129
Torino, Italy (e-mail: michele.elia@polito.it)} ~~~}

% \date{December, 2018}
\maketitle

\begin{abstract}
\noindent 
It is shown that a partition $\mathfrak A\cup \mathfrak B$ of the set 
$\mathbb F_{p^m}^*=\mathbb F_{p^m}-\{0 \}$, with $|\mathfrak A|=|\mathfrak B|$,  
is the separation into squares and non squares, 
  if and only if the elements of $\mathfrak A$ and $\mathfrak B$ satisfy certain additive properties, 
  thus providing a purely additive characterization of the set of squares in $\mathbb F_{p^m}$.
\end{abstract} 

\bigskip

\noindent {\bf Mathematics Subject Classification (2010)}: 11A15, 11N69, 11R32

\vspace{2mm} \noindent {\bf Keywords}:  
 finite field, multivariate polynomial, even partition. 

\section{Introduction}
In 1952, Oskar Perron gave some additive properties of the fibers of the quadratic character on $\mathbb F^∗_p$  \cite{perron}.
 Specifically, he showed that if $\mathfrak A,\mathfrak B \subset \mathbb F_p$ are the subsets of quadratic residues and non-residues respectively, then,
letting $d_p=\frac{p-1}{4}$ if $p\equiv 1 \pmod 4$, and $d_p=\frac{p+1}{4}$ if $p\equiv 3 \pmod 4$, 
\begin{enumerate}
 \item Every element of $\mathfrak A$ [respectively $\mathfrak B$] can be written as a sum of two elements of $\mathfrak A$ [respectively $\mathfrak B$] in exactly $d_p-1$  ways.
 \item Every element of $\mathfrak A$ [respectively $\mathfrak B$] can be written as a sum of two elements of $\mathfrak B$ [respectively $\mathfrak A$] in exactly $d_p$ ways.
\end{enumerate}
It was natural to inquire about just how strong this result is, and in \cite{monico} it is shown that these additive properties uniquely characterize the even partition of $\mathbb F^∗_p$ into quadratic residues and non-residues. 
Further, in \cite{monico1} this result has been generalized  (and the "even" restriction removed) to fibers of arbitrary characters on $\mathbb F^∗_p$, with suitable 
cyclotomic numbers in place of the constants above. 
In the present paper, it is considered
the generalization of the even partition (i.e. by the quadratic character $\chi_2$)
 to every finite field of odd characteristic,
 that is, the partition of  $\mathbb F^∗_{p^m}$ into squares and non-squares. 
Specifically, the following theorem is proved.
\begin{theorem}\label{uuu}
Let $p$ be an odd prime, $m$ any positive integer, and set
$$  d_{p^m}=  \left\{ \begin{array}{ll}
           \sty   \frac{p^m-1}{4} ~~, &~~  \mbox{if} ~~  p=1 ~\bmod 4   \\
                         &   \\
           \sty   \frac{p^m-(-1)^m}{4} ~~,&~~  \mbox{if} ~~  p=3~ \bmod 4~~,
       \end{array}  \right.
$$
then 
\begin{description}
\item[1.] Every square $\beta \in \mathbb F^∗_{p^m}$ [non-square $\eta$], i.e. $\chi_2(\beta)=1$  [$\chi_2(\eta)=-1$], can be written as a sum of two squares [non-squares] in exactly  $d_{p^m} - 1$ ways. \\
Every  square [non-square] can be written as a sum of two  non-squares [squares] in exactly $d_{p^m}$ ways. \\
Every non-zero square can be written as a sum of a square and a non-square in exactly $p^m-1-2d_{p^m}$ ways. 
\item[2.] Conversely, every partition of $\mathbb F^*_{p^m}$ into two sets
of the same cardinality and satisfying the above properties, is necessarily constituted by the partition in squares and non-squares as defined by $\chi_2$.
\end{description}
\end{theorem}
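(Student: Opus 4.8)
The plan is to translate the purely additive sum-counts into statements about the Fourier transform over the additive group $(\mathbb F_{p^m},+)$, where each count is a convolution and hence a product of Fourier coefficients. Write $q=p^m$, let $\psi$ range over the additive characters of $\mathbb F_q$ (so $\psi_a(x)=\psi_1(ax)$), and for $S\subseteq\mathbb F_q$ put $\widehat S(\psi)=\sum_{s\in S}\psi(s)$. Set $d:=d_{p^m}$; the single arithmetic identity $4d=q-\chi_2(-1)$, together with $\chi_2(-1)=1$ for $q\equiv1\pmod4$ and $\chi_2(-1)=-1$ for $q\equiv3\pmod4$, underlies all the bookkeeping.

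For the direct statement (part 1) I would compute, for $c\neq0$ and signs $\epsilon_1,\epsilon_2\in\{\pm1\}$, the number $r_{\epsilon_1\epsilon_2}(c)$ of ordered pairs $(a,b)$ with $a+b=c$, $a,b\neq0$, $\chi_2(a)=\epsilon_1$ and $\chi_2(b)=\epsilon_2$, using the indicator $\tfrac12(1+\epsilon\chi_2(\cdot))$. Expanding $\tfrac14(1+\epsilon_1\chi_2(a))(1+\epsilon_2\chi_2(b))$ and summing over $a+b=c$ reduces everything to three elementary sums: $\sum_{a\neq0,c}1=q-2$, $\sum_{a\neq0,c}\chi_2(a)=-\chi_2(c)$, and the mixed sum $\sum_{a}\chi_2(a)\chi_2(c-a)=\sum_a\chi_2(a(c-a))$. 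The last is the standard quadratic character sum, equal to $-\chi_2(-1)$ for $c\neq0$ (complete the square and use $\sum_u\chi_2(u^2-B)=-1$ for $B\neq0$). This gives the closed form
\[
r_{\epsilon_1\epsilon_2}(c)=\frac14\left[(q-2)-(\epsilon_1+\epsilon_2)\chi_2(c)-\epsilon_1\epsilon_2\,\chi_2(-1)\right],
\]
and substituting $\epsilon_1=\epsilon_2=\chi_2(c)$, then $\epsilon_1=\epsilon_2=-\chi_2(c)$, then $\epsilon_1=-\epsilon_2$ (doubled to account for both orders) yields exactly $d-1$, $d$ and $p^m-1-2d$, proving part 1. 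The passage from $\mathbb F_p$ to $\mathbb F_{p^m}$ is automatic, since these character-sum evaluations hold over any finite field.

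For the converse (part 2) I would introduce $f=\mathbf 1_{\mathfrak A}-\mathbf 1_{\mathfrak B}$, so $f(0)=0$ and $\sum_x f(x)=|\mathfrak A|-|\mathfrak B|=0$. The four hypothesised counts are equivalent to $(\mathbf 1_{\mathfrak A}*\mathbf 1_{\mathfrak A})+(\mathbf 1_{\mathfrak B}*\mathbf 1_{\mathfrak B})\equiv 2d-1$ on $\mathbb F_q^*$ (their difference is forced to be $-f$ by balance alone, so carries no information). Writing $\mathbf 1_{\mathfrak A},\mathbf 1_{\mathfrak B}=\tfrac12((\mathbf 1-\delta_0)\pm f)$ and expanding, the cross terms cancel and this collapses to a single autocorrelation identity $(f*f)(c)=4d-q=-\chi_2(-1)$ for every $c\neq0$. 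Transforming, $\widehat f(\psi_a)^2$ is then a constant independent of $a$, and Parseval ($\sum_a\widehat f(\psi_a)^2=q\,(f*f)(0)$, with $\widehat f(\psi_0)=0$) pins that constant to $\chi_2(-1)\,q$. Hence $\widehat f(\psi_a)=\epsilon(a)\,G$ with $\epsilon(a)\in\{\pm1\}$ and $G=\sum_t\chi_2(t)\psi_1(t)$ the quadratic Gauss sum; since $\widehat{\chi_2}(\psi_a)=\chi_2(a)\,G$, the whole converse reduces to showing $\epsilon(a)=\chi_2(a)$ for all $a$ (up to the global sign swapping $\mathfrak A\leftrightarrow\mathfrak B$).

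The hard part is exactly this last step, and I expect it—rather than the Fourier bookkeeping—to be the crux of the entire argument. A flat spectrum only says that $\mathfrak A$ is a partial difference set with Paley parameters, and such configurations are not pinned down by the additive data alone; the conclusion $f=\pm\chi_2$ must therefore genuinely invoke the multiplicative structure of the field, which is invisible to convolution. Concretely, I would aim to prove that $\mathfrak A$ is a union of cosets of the subgroup $\mathfrak S$ of squares, whereupon $|\mathfrak A|=\tfrac{q-1}{2}=|\mathfrak S|$ forces $\mathfrak A=\mathfrak S$ or its complement. To bring multiplication in, I would exploit that each dilation $x\mapsto tx$ is an automorphism of $(\mathbb F_q,+)$ carrying $\psi_a\mapsto\psi_{ta}$, so the flat-spectrum relation is stable under scaling, and couple this with the arithmetic of $G$ (its Galois action via Stickelberger, or an integrality argument applied to the inversion $\mathbf 1_{\mathfrak A}(x)=\tfrac1q\sum_a\widehat{\mathbf 1_{\mathfrak A}}(\psi_a)\psi_{-a}(x)$, whose values are constrained to $\{0,1\}$) to force the sign pattern $\epsilon$ to be multiplicative, i.e. $\epsilon=\chi_2$.
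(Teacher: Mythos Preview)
Your treatment of part 1 via the indicator expansion and the evaluation $\sum_{a}\chi_2(a)\chi_2(c-a)=-\chi_2(-1)$ is correct and is the standard route; the paper instead encodes the fibers by multivariate generating polynomials and reads off the counts from an identity $q_r^2+q_r=q_n^2+q_n$ together with a global count. Both arguments give the same numbers, yours with less machinery.

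For part 2 your reduction to the autocorrelation identity $(f*f)(c)=-\chi_2(-1)$ for $c\neq 0$, equivalently $|\widehat f(\psi_a)|^2=q$ for all $a\neq 0$, is exactly right, and so is your diagnosis that this only says $\mathfrak A$ is a Paley--type partial difference set in the additive group. The step you flag as ``the crux'' is not merely hard: it is impossible, because the statement you are asked to prove is false for $m\ge 2$. In $\mathbb F_9$ (write elements as pairs over $\mathbb F_3$) take
\[
\mathfrak A=\{(1,0),(2,0),(0,1),(0,2)\},\qquad \mathfrak B=\{(1,1),(1,2),(2,1),(2,2)\}.
\]
One checks directly that each element of $\mathfrak A$ is a sum of two elements of $\mathfrak A$ in exactly $d_9-1=1$ way and a sum of two elements of $\mathfrak B$ in exactly $d_9=2$ ways, and symmetrically for $\mathfrak B$; moreover $1\in\mathfrak A$ and $|\mathfrak A|=4$. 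Yet $\mathfrak A$ is neither the set of squares $\{(1,0),(2,0),(1,1),(2,2)\}$ nor its complement. So your hoped--for conclusion that $\mathfrak A$ is a union of square--cosets cannot hold, and no amount of Stickelberger or integrality will rescue it.

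The paper's own argument for part 2 breaks at the base of its Hensel lemma: it asserts that $\mathbb R_1=\mathbb F_{p^m}[\mathbf x]/\langle\prod_{j=1}^m(x_j-1)\rangle$ equals $\mathbb F_{p^m}$, which is only true for $m=1$; for $m\ge 2$ this quotient has zero divisors and an invertible element can have more than two square roots. Concretely, the polynomial $a(\mathbf x)=x_1+x_1^2+x_2+x_2^2$ attached to the set $\mathfrak A$ above is a third solution of the very quadratic $y^2+y=d_{p^m}\bigl(\prod_j(x_j-1)^{p-1}-1\bigr)+c_{p^m}$ in $\mathbb R_p$, alongside $q_r$ and $q_n$. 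Your instinct that the additive data alone do not see the multiplicative structure was therefore the correct one; the gap you could not close is a gap in the theorem, not in your argument.
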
 

\vspace{4mm}
\noindent
Let $\gamma$ denote a root of an irreducible (possibly primitive) polynomial  $p(x) = x^m+ a_{m-1}x^{m-1}+ \cdots + a_0$  
 over $\mathbb F_p$. The elements of
 $\mathbb F_{p^m}$ are represented, in the basis 
 $\mathcal B=\{ 1,\gamma, \ldots, \gamma^{m-1} \}$, as $m$-dimensional vectors
 in $\mathbb F_p^m$, that is
$$   \beta \in \mathbb F_{p^m}  \Leftrightarrow [b_1, b_2, \ldots b_{m} ] \in \mathbb F_{p}^m ~~, $$
and every element of $\mathbb F_{p^m}$ will be interchangeably denoted with $\beta$ or $[b_1, b_2, \ldots b_{m} ]$. \\
Let $\mathfrak Q_{p^m}$ and $\mathfrak N_{p^m}$ be the sets of squares and
 non-squares of $\mathbb F_{p^m}^*$, respectively; recalling that $\beta \in \mathfrak Q_{p^m}$ 
if and only if the field norm $\mathcal N(\beta)$ is a quadratic residue in $\mathbb F_p$, the quadratic character $\chi_2$ over $\mathbb F_{p^m}$ can be defined,  for $\beta \neq 0$, as 
$\chi_2(\beta)=\legendre{\mathcal N(\beta)}{p}$, where $\legendre{.}{p} $ indicates the Legendre symbol.
To prove  Theorem \ref{uuu}, we introduce a description of $\mathfrak Q_{p^m}$ and $\mathfrak N_{p^m}$
using multivariate polynomials in $m$ variables
$$  \mathfrak Q_{p^m}  \Leftrightarrow   q_r(\mathbf x)= \sum_{\beta \in \mathfrak Q_{p^m}} \prod_{j=1}^{m} x_j^{b_j}   ~~~,~~~
      \mathfrak N_{p^m}   \Leftrightarrow q_n(\mathbf x)=  \sum_{\beta \in \mathfrak N_{p^m}} \prod_{j=1}^{m} x_j^{b_j}  ~~.  $$

\subsection{Proof of Claim 1.}
     \label{sect1}
Since the fibers  $ \mathfrak Q_{p^m}$ and $ \mathfrak N_{p^m}$ form a partition of $\mathbb F_{p^m}$,
 the $0 \in \mathbb F_{p^m}$ standing alone, we have
$$    1 + q_r(\mathbf x) + q_n(\mathbf x) =  \prod_{j=1}^{m} \frac{x_j^p-1}{x_j-1}  ~~,   $$
as well as the following proposition

\begin{proposition}
   \label{pippo}
The set $\{1,  q_r(\mathbf x), q_n(\mathbf x) \}$ is a basis of a subspace $\mathcal V_{3}$ of dimension $3$ in the $p^m$-dimensional vector space 
$\mathbb Q[\mathbf x]/\langle (x_1^p -1) , (x_2^p -1),  \cdots,  (x_m^p -1)  \rangle$ of multivariate polynomials of degree at most $p -1$ in each variable.
\end{proposition}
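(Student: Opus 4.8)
The plan is to read off everything from the standard monomial basis of the quotient space. After reduction modulo the ideal $\langle x_1^p-1,\ldots,x_m^p-1\rangle$, the $p^m$ monomials $\prod_{j=1}^{m} x_j^{e_j}$ with $0\le e_j\le p-1$ form a coordinate basis of the $p^m$-dimensional vector space $\mathbb Q[\mathbf x]/\langle (x_1^p-1),\ldots,(x_m^p-1)\rangle$. First I would observe that each of $1$, $q_r(\mathbf x)$, $q_n(\mathbf x)$ is a $0/1$-combination of these basis monomials: the constant $1$ is the single monomial attached to the zero vector $[0,\ldots,0]$, while by their very definitions $q_r$ and $q_n$ are the sums of the monomials $\prod_{j} x_j^{b_j}$ attached, respectively, to the representation vectors of the squares and of the non-squares. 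Since distinct elements of $\mathbb F_{p^m}$ have distinct representation vectors $[b_1,\ldots,b_m]$ with each entry already in the reduced range $\{0,\ldots,p-1\}$, these monomials are pairwise distinct basis elements in the quotient, so the three supports are well defined.

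Next I would invoke the partition $\mathbb F_{p^m}=\{0\}\cup\mathfrak Q_{p^m}\cup\mathfrak N_{p^m}$ (a disjoint union, the $0$ standing alone) to conclude that the three supports — the lone zero monomial, the square-monomials, and the non-square-monomials — are pairwise disjoint. Each support is moreover nonempty: $1\neq 0$ trivially, and for $p$ odd and $m\ge 1$ both $\mathfrak Q_{p^m}$ and $\mathfrak N_{p^m}$ have cardinality $\frac{p^m-1}{2}\ge 1$, so $q_r(\mathbf x)$ and $q_n(\mathbf x)$ are nonzero.

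The linear independence then follows from the general fact that nonzero vectors with pairwise disjoint supports in a coordinate basis are independent: writing $a+b\,q_r(\mathbf x)+c\,q_n(\mathbf x)=0$ and reading off the coefficient of any monomial in the support of $q_r$ forces $b=0$, similarly $c=0$ from the support of $q_n$, and finally $a=0$ from the zero monomial. Hence $\{1,q_r(\mathbf x),q_n(\mathbf x)\}$ is linearly independent and spans a $3$-dimensional subspace $\mathcal V_3$, as claimed. I do not expect a genuine obstacle here; the statement is essentially immediate once the monomial encoding is in place, and the only point deserving a line of care is the \emph{faithfulness} of that encoding — that reduction modulo $x_j^p-1$ sends distinct field-element representations to distinct basis monomials — which is guaranteed by the choice of exponents in $\{0,\ldots,p-1\}$.
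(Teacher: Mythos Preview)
Your argument is correct and is precisely the natural elaboration of what the paper does: the paper states the proposition immediately after observing that $\{0\}\cup\mathfrak Q_{p^m}\cup\mathfrak N_{p^m}$ is a partition of $\mathbb F_{p^m}$, offering no further proof, and your disjoint-support argument in the monomial basis is exactly how one cashes out that partition as linear independence.
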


\noindent
The representatives of $q_r(\mathbf x)^2$  and $q_n(\mathbf x)^2$ modulo 
 $\langle (x_1^p -1) , (x_2^p -1),  \cdots,  (x_m^p -1)  \rangle$ in $\mathbb Q[x]$
 are denoted by
$$  \begin{array}{lclcl}
q_r(\mathbf x)^2& = &  \sum_{\beta \in \mathfrak Q_r} A_{b_1, \ldots,b_m} \prod_{j=1}^{m} x_j^{b_j}&~~&  \bmod \langle (x_1^p -1) , (x_2^p -1),  \cdots,  (x_m^p -1)  \rangle  \\
 q_n(\mathbf x)^2& = &  \sum_{\beta \in \mathfrak Q_n}B_{b_1, \ldots,b_m} \prod_{j=1}^{m} x_j^{b_j} &~~&  \bmod \langle (x_1^p -1) , (x_2^p -1),  \cdots,  (x_m^p -1)  \rangle  \\
      \end{array}
$$
where $ A_{b_1, \ldots,b_m}$ and  $B_{b_1, \ldots,b_m}$ are non-negative integers smaller than $p^m$.   
It is observed that $ A_{b_1, \ldots,b_m} $ [or $B_{b_1, \ldots,b_m}$] is precisely the number of ways in which every $\beta \in \mathbb F_{p^m}$ can be written as a sum of two 
squares [or non-squares]. 
The numbers
 $ A_{b_1, \ldots,b_m}$ and $B_{b_1, \ldots,b_m}$ are elements of the set $\mathcal R= \{0,1,2,\ldots ,p^m-1\}$.
% of canonical representatives of the ring $\mathbb Z/p^m \mathbb Z$. %can be considered as

\begin{lemma}
  \label{lem11}
 Let $p$ be an odd prime, $m$ be a positive integer, and $ A_{b_1, \ldots,b_m}  , B_{b_1, \ldots,b_m}$ as defined above. 
Then for every $\alpha,  \beta \in  \mathbb F_{p^m}$,
the following hold:
 \begin{enumerate}
    \item  $ B_{b_1, \ldots,b_m}-A_{b_1, \ldots,b_m}=(\mathcal N(\beta)|p)$.
    \item  If $(\mathcal N(\beta)|p)=(\mathcal N(\alpha)|p)$, then 
      $A_{b_1, \ldots,b_m} =A_{a_1, \ldots, a_m}$   and   
      $B_{b_1, \ldots,b_m} =B_{a_1, \ldots, a_m}$.
   \item If $(\mathcal N(\beta)|p)\neq (\mathcal N(\alpha)|p)$, then
$$   \begin{array}{l}
         A_{b_1, \ldots,b_m} =A_{a_1, \ldots, a_m}+(\mathcal N(\alpha)|p)   \\
        B_{b_1, \ldots,b_m} =B_{a_1, \ldots, a_m}-(\mathcal N(\alpha)|p)   \\
       \end{array}
$$	
 \end{enumerate}
\end{lemma}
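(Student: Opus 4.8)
The plan is to read the coefficients combinatorially and then exploit the multiplicative symmetry of the quadratic character, reserving a short character‑sum computation only for the first item. By construction $A_{b_1,\ldots,b_m}$ is the number of ordered pairs $(\gamma_1,\gamma_2)$ of squares with $\gamma_1+\gamma_2=\beta$, and $B_{b_1,\ldots,b_m}$ the number of such pairs of non‑squares; I abbreviate these by $A_\beta$ and $B_\beta$ and take $\beta\neq 0$ throughout, so that $\chi_2(\beta)=\legendre{\mathcal N(\beta)}{p}$ is defined. The one tool I would isolate at the start is the indicator identity: for $\gamma\neq 0$ the characteristic function of $\mathfrak Q_{p^m}$ is $\frac12(1+\chi_2(\gamma))$ and that of $\mathfrak N_{p^m}$ is $\frac12(1-\chi_2(\gamma))$.

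First I would prove item 1. Expressing $A_\beta$ and $B_\beta$ via the indicator identity and summing over the $p^m-2$ admissible pairs $(\gamma_1,\beta-\gamma_1)$ with $\gamma_1\neq 0,\beta$, the constant term and the cross term $\chi_2(\gamma_1)\chi_2(\gamma_2)$ occur with the same sign in both and therefore cancel in the difference, leaving
$$ B_\beta - A_\beta \;=\; -\sum_{\gamma_1\neq 0,\,\beta}\chi_2(\gamma_1). $$
Because $\chi_2$ is a nontrivial character of the group $\mathbb F_{p^m}^*$, its sum over all nonzero elements is $0$; removing the single term $\gamma_1=\beta$ then gives $-(0-\chi_2(\beta))=\chi_2(\beta)$, which is exactly item 1.

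For items 2 and 3 I would avoid any further character‑sum evaluation and argue by transport of structure. Fix $c\neq 0$; the map $\gamma\mapsto c\gamma$ is a bijection of $\mathbb F_{p^m}^*$ carrying the solution set $\{\gamma_1+\gamma_2=\alpha\}$ onto $\{\gamma_1'+\gamma_2'=c\alpha\}$. If $c$ is a square it stabilises both $\mathfrak Q_{p^m}$ and $\mathfrak N_{p^m}$, so $A_{c\alpha}=A_\alpha$ and $B_{c\alpha}=B_\alpha$; if $c$ is a non‑square it interchanges the two fibers, so $A_{c\alpha}=B_\alpha$ and $B_{c\alpha}=A_\alpha$. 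Now $\chi_2(\beta)=\chi_2(\alpha)$ holds precisely when $\beta=c\alpha$ with $c$ a square, which yields item 2 at once; and $\chi_2(\beta)=-\chi_2(\alpha)$ holds precisely when $\beta=c\alpha$ with $c$ a non‑square, which gives $A_\beta=B_\alpha$ and $B_\beta=A_\alpha$. Feeding the relation $B_\alpha-A_\alpha=\chi_2(\alpha)$ from item 1 into these two equalities produces $A_\beta=A_\alpha+\chi_2(\alpha)$ and $B_\beta=B_\alpha-\chi_2(\alpha)$, which is item 3.

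The only genuine obstacle is item 1: one must check carefully that the cross terms cancel under subtraction and that the surviving linear sum is the full character sum over $\mathbb F_{p^m}^*$ minus one controlled term; once this is in place, items 2 and 3 are pure bookkeeping built on the multiplicative symmetry. I note that an alternative route would establish the closed forms $A_\beta=\frac14\bigl(p^m-2-2\chi_2(\beta)-\chi_2(-1)\bigr)$ and the corresponding expression for $B_\beta$, but that approach forces one to evaluate the Jacobsthal‑type sum $\sum_{\gamma_1}\chi_2(\gamma_1)\chi_2(\beta-\gamma_1)=-\chi_2(-1)$, i.e. the character sum of a quadratic over $\mathbb F_{p^m}$; the symmetry argument above delivers all three relations while sidestepping that computation entirely.
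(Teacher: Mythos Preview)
Your proposal is correct. Items~2 and~3 proceed exactly as the paper does, via multiplicative transport: choosing $c$ with $\beta=c\alpha$ and tracking whether $c$ fixes or swaps the fibers $\mathfrak Q_{p^m}$ and $\mathfrak N_{p^m}$. The paper's write-up of item~3 is somewhat terse (it records $A_\eta=B_\alpha$ and leaves the reader to combine this with item~1), whereas you spell out the derivation of $A_\beta=A_\alpha+\chi_2(\alpha)$ explicitly; but the underlying idea is identical.

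Item~1, however, is obtained by a genuinely different route. The paper stays inside its generating-function framework: from $q_r(\mathbf e)=q_n(\mathbf e)$ it factors $q_r-q_n=Q(\mathbf x)\prod_j(x_j-1)$, multiplies by $q_r+q_n=-1+\prod_j(x_j^p-1)/(x_j-1)$, and reduces modulo $\prod_j(x_j^p-1)$ to obtain the polynomial identity $q_n(\mathbf x)^2-q_r(\mathbf x)^2=q_r(\mathbf x)-q_n(\mathbf x)$, from which $B_\beta-A_\beta=\chi_2(\beta)$ can be read off coefficient by coefficient. You instead expand $A_\beta$ and $B_\beta$ through the indicator $\tfrac12(1\pm\chi_2)$ and evaluate the surviving linear character sum directly. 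Your argument is shorter and entirely self-contained; the paper's argument buys consistency with the multivariate-polynomial machinery that is reused later (notably the quadratic equation for $a(\mathbf x)$ in the proof of Claim~2), so the identity $q_r^2+q_r=q_n^2+q_n$ is doing double duty there. One small presentational point: when you collapse to $-\sum_{\gamma_1\neq 0,\beta}\chi_2(\gamma_1)$ you are silently using that the two linear sums $\sum\chi_2(\gamma_1)$ and $\sum\chi_2(\beta-\gamma_1)$ coincide under $\gamma_1\mapsto\beta-\gamma_1$; it would be worth saying so explicitly, since otherwise a factor of $\tfrac12$ appears to go missing.
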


\begin{proof}
Let $\mathbf e$ be the all-one $m$-dimensional vector, then $q_r(\mathbf e)=q_n(\mathbf e) =\frac{p^m-1}{2}$, 
   thus 
$$q_r(\mathbf x)-q_n(\mathbf x) = Q(\mathbf x) \prod_{j=1}^{m}( x_j-1) ~~ $$
which, multiplied by
$$     q_r(\mathbf x) + q_n(\mathbf x) = -1 + \prod_{j=1}^{m} \frac{x_j^p-1}{x_j-1}  ~~,   $$
gives
$$ q_r(\mathbf x)^2-q_n(\mathbf x)^2 = -Q(\mathbf x) \prod_{j=1}^{m}( x_j-1) + Q(\mathbf x) \prod_{j=1}^{m} (x_j^p-1) =  -Q(\mathbf x) \prod_{j=1}^{m}( x_j-1)~~ \bmod \prod_{j=1}^{m} (x_j^p-1)  ~~. $$
That is
$$  q_n(\mathbf x)^2-q_r(\mathbf x)^2 = q_r(\mathbf x)-q_n(\mathbf x)    \Rightarrow 
      q_r(\mathbf x)^2 + q_r(\mathbf x) =   q_n(\mathbf x)^2 + q_n(\mathbf x)  \pmod {\prod_{j=1}^{m} (x_j^p-1) } ~~, $$
where the left equality proves item 1. \\
Suppose now that $\chi_2(\alpha)=\chi_2(\beta)=1$  in $ \mathbb F_{p^m}$. Then there exist a square
 $\delta \in   \mathbb F_{p^m}$ so that $\beta=\delta \alpha$. 
 If $\chi_2(x)=\chi_2(y)=1$, with $\alpha = x+y$, it follows that $\beta = \delta x+\delta y$ and 
 $\delta x, \delta y$ are also squares. 
Thus $A_{b_1, \ldots,b_m} =A_{a_1, \ldots, a_m}$, and with a similar argument
 $B_{b_1, \ldots,b_m} =B_{a_1, \ldots, a_m}$. \\
Suppose $\chi_2(\alpha)=1$, and that $\alpha = x+y$  is a sum of two non-squares, 
 let $\beta$ be any non-square, then
$$ \eta=   \beta  \alpha = \beta x +\beta y  $$
says that a non-square is the sum of two squares, it follows that  $A_\eta= B_\alpha$ with $\eta$ a non-square,
 and $\alpha$ a square, 
the same equality holds by exchanging square and non-square.
\end{proof}

\noindent
Let $A_1$ and $A_{-1}$ denote the common value of the $A_{\alpha}$ with 
$\legendre{\mathcal N(\alpha)}{p} = 1$ and $-1$, respectively. 
Similarly, define $B_1$ and $B_{-1}$  to be the common values of the $B_{\alpha}$ for $\legendre{\mathcal N(\alpha)}{p} = 1$ and $-1$, respectively. 
Further, from Lemma \ref{lem11}, we have $A_1=B_{-1}$, and
$B_1=A_{-1}$. Let $A_0$ denote the number of sums of two squares giving $0$,
then $A_0=0$ if $p\equiv 3 \pmod 4$ and $m$ odd because $\chi_2(-1)=-1$, otherwise $A_0=\frac{p^m-1}{2}$ because $\chi_2(-1)=1$, i.e. $-1$ is a square.
A direct counting of the number of sums of two squares gives
$$ \frac{p^m-1}{2} A_1+ \frac{p^m-1}{2} A_{-1} +A_0 =\left(\frac{p^m-1}{2} \right)^2~~, $$
therefore, in view of the above observations, we have
\begin{equation}
   \label{zero}
 A_1+A_{-1}  = \left\{ \begin{array}{lcl}
                                         \sty   \frac{p^m-3}{2} &~~&  \mbox{if  $p\equiv 1 \pmod 4$}  \\
          & & \\
                                       \sty     \frac{p^m-2-(-1)^m}{2} &~~&  \mbox{if  $p\equiv 3 \pmod 4$}  \\
          \end{array}   \right. ~~,
 \end{equation}
furthermore $A_1+A_{-1} =B_1+B_{-1}$.

\begin{theorem} 
   \label{th2}
Let $p^m$ be a power of an odd prime and set
\begin{equation}
   \label{eqdp}
  d_{p^m} = \left\{  \begin{array}{lcl}
       \sty  \frac{p^m-1}{4} & ~~& \mbox{if  $p \equiv 1 \pmod{4}$} \\
           &  &  \\
        \sty  \frac{p^m-(-1)^m}{4} &~~& \mbox{if  $p \equiv 3 \pmod{4}$} ~. \\
       \end{array}  \right.
\end{equation}
Then every square [non-square] can be written as a sum of two squares [non-squares] in exactly 
$d_{p^m} - 1$ ways. Every square [non-square] can be written as a sum of two non-squares in exactly 
$d_{p^m}$ ways. Moreover, every non-zero element can be written as a sum of a square and a 
non-square in exactly $p^m-1-2d_{p^m}$ ways.
\end{theorem}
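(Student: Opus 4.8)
The plan is to pin down the four common values $A_1, A_{-1}, B_1, B_{-1}$ exactly by solving a tiny linear system, and then to obtain the mixed (square-plus-non-square) count by counting all ordered representations. By Lemma~\ref{lem11}(2) these four numbers are well defined, since each of $A_\beta, B_\beta$ depends only on $\chi_2(\beta)$, so the whole statement reduces to arithmetic once the right relations are in hand.

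First I would assemble the linear relations already available. Lemma~\ref{lem11}(1), applied to a square and then to a non-square, gives $B_1 - A_1 = 1$ and $B_{-1} - A_{-1} = -1$, while the identities $A_1 = B_{-1}$ and $B_1 = A_{-1}$ recorded after the lemma couple the two families. Substituting $B_1 = A_{-1}$ into $B_1 - A_1 = 1$ collapses everything to the single equation $A_{-1} - A_1 = 1$. Combining this with the value of $A_1 + A_{-1}$ supplied by relation~(\ref{zero}), say $A_1 + A_{-1} = S$, the system $A_{-1} - A_1 = 1,\; A_1 + A_{-1} = S$ has the unique solution $A_1 = (S-1)/2$ and $A_{-1} = (S+1)/2$. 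For $p \equiv 1 \pmod 4$ one has $S = (p^m-3)/2$, whence $A_1 = (p^m-5)/4 = d_{p^m}-1$ and $A_{-1} = (p^m-1)/4 = d_{p^m}$; for $p \equiv 3 \pmod 4$ one has $S = (p^m-2-(-1)^m)/2$, whence $A_1 = (p^m-4-(-1)^m)/4 = d_{p^m}-1$ and $A_{-1} = (p^m-(-1)^m)/4 = d_{p^m}$ after simplification. The relations $B_{-1} = A_1$ and $B_1 = A_{-1}$ then read off the first two assertions: a square (resp. non-square) is a sum of two squares (resp. two non-squares) in $A_1 = B_{-1} = d_{p^m}-1$ ways, and a sum of two non-squares (resp. two squares) in $B_1 = A_{-1} = d_{p^m}$ ways.

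Finally, for the mixed count I would use that, for a fixed $\beta \neq 0$, the coefficient of $\prod_{j} x_j^{b_j}$ in $(q_r + q_n)^2$ counts the ordered pairs $(x,y)$ with $x + y = \beta$ and $x, y \neq 0$, hence equals $p^m - 2$. Expanding $(q_r + q_n)^2 = q_r^2 + 2 q_r q_n + q_n^2$ splits this coefficient into $A_\beta + B_\beta$ together with the coefficient of $2 q_r q_n$, which is exactly the number of representations of $\beta$ as a square plus a non-square (in either order). Since $A_\beta + B_\beta = (d_{p^m}-1) + d_{p^m} = 2 d_{p^m}-1$ irrespective of whether $\beta$ is a square or a non-square, the mixed count equals $p^m - 1 - 2 d_{p^m}$ for every nonzero $\beta$, completing the proof. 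I do not anticipate a genuine obstacle here, as the theorem is essentially a consequence of Lemma~\ref{lem11} and relation~(\ref{zero}); the only points needing care are bookkeeping, namely keeping all counts \emph{ordered} throughout so that the cross term $2 q_r q_n$ is accounted for correctly in the mixed case, and checking that both parity cases of~(\ref{zero}) simplify to the same stated value of $d_{p^m}$.
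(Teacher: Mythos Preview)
Your proposal is correct and follows essentially the same route as the paper: you derive $A_{-1}-A_1=1$ from Lemma~\ref{lem11} together with $A_{-1}=B_1$, combine it with~(\ref{zero}) to solve for $A_1,A_{-1}$, and then obtain the mixed count by subtracting $A_\beta+B_\beta=2d_{p^m}-1$ from the total $p^m-2$ representations of $\beta$ as an ordered sum of two nonzero elements. The only cosmetic difference is that you phrase this last step via the coefficient of $(q_r+q_n)^2$, whereas the paper states it directly as a count of solutions to $x_1+x_2=\beta$.
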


\begin{proof}
As above, let $A_1$, $B_1$ denote respectively the number of ways in which a square can be written 
as a sum of two squares or two non-squares. Let $A_{-1}$, $B_{-1}$ denote respectively 
the number of ways in which a non-square can be written as a sum of two squares or two 
 non-squares. 
To show that $d_{p^m}  =A_{-1} = B_{1}$ and  $d_{p^m} - 1 = A_{1} = B_{-1}$, we 
consider the equation (\ref{zero}) and the equation $A_{-1}-A_1=1$ from Lemma \ref{lem11},
which is obtained noting that $A_{-1}=B_{1}$. \\
Solving the trivial linear system we obtain the claimed values for $A_1$, $A_{-1}$, and in turn the values for $B_1$, $B_{-1}$. \\
The number of ways, that every non-zero element is written as a sum of a square and a 
non-square, is obtained by observing that the equation $x_1 + x_2 = \beta \neq 0$ in 
$\mathbb F_{p^m}$ has $p^m - 2$ solutions with neither $x_1$ nor $x_2$ equal $0$. 
Therefore, the number of solutions with $x_1$ a square, and $x_2$ a 
non-square, or vice-versa, is $p^m - 2 - (2 d_{p^m} - 1)$.
\end{proof}

\subsection{Proof of Claim 2.}  %, (the converse)}
The goal of this section is to show that the additive properties given in Section \ref{sect1}
 completely characterize the squares in $\mathbb F_{p^m}$. 
Let $d_{p^m}$ be defined as in Equation (\ref{eqdp}), and, for the remainder of this section, 
suppose $\mathfrak A$ and $\mathfrak B$ form an even partition of 
$\mathbb F_{p^m}  - \{ 0 \}$ such that

\begin{enumerate}
   \item Every element of $\mathfrak A$ [$\mathfrak B$] can be written as a sum of two elements
      from
      $\mathfrak A$ [$\mathfrak B$] in exactly $d_{p^m} -1$ ways.
\item Every element of $\mathfrak A$ [ $\mathfrak B$ ] can be written as a sum of two elements
    from $\mathfrak B$ [$\mathfrak A$ ] in exactly $d_{p^m}$ ways.
\end{enumerate}

\noindent
Define two polynomials in  $\mathbb F_{p^m}[\mathbf x]$,
$$   a(\mathbf x)= \sum_{\beta \in \mathfrak A} \prod_{j=1}^{m} x_j^{b_j}   ~~~,~~~
     b(\mathbf x)=  \sum_{\beta \in \mathfrak B} \prod_{j=1}^{m} x_j^{b_j}   ~~.  $$
￼
It follows from the assumptions on the sets  $\mathfrak A$ and $\mathfrak B$  that
$$  \begin{array}{lcl}
          a(\mathbf x)^2 &=&  (d_{p^m} -1)a(\mathbf x) + d_{p^m} b(\mathbf x) + c_ {p^m}  \\
          &=& d_{p^m}( -1 + \prod_{j=1}^{m} \frac{x_j^p-1}{x_j-1})-a(\mathbf x)+c_{p^m} \\
        &=& d_{p^m}( -1 + \prod_{j=1}^{m} (x_j-1)^{p-1})-a(\mathbf x)+
   c_{p^m} ~~\bmod \prod_{j=1}^{m} (x_j^p-1) , \pmod p    ~~.
 \end{array}
   $$
where the identity $\frac{x_j^p-1}{x_j-1}= (x_j-1)^{p-1} \bmod p$ has been used.
Thus, we can write the equation 
\begin{equation}
   \label{eqfund1}
 a(\mathbf x)^2 + a(\mathbf x) = d_{p^m}(\prod_{j=1}^{m} (x_j-1)^{p-1} -1) + c_{p^m}
     ~~\pmod{\prod_{j=1}^{m} (x_j^p-1) }~,~\pmod p  ~~,
\end{equation}
where
\begin{enumerate}
  \item[] $c_{p^m}$ is the number of ways in which zero can be written as a sum of two elements of
 $\mathfrak A$, and can be explicitly computed by evaluating (\ref{eqfund1}) at 
 $\mathbf x = \mathbf e$, obtaining $c_{p^m}=\frac{p^m-1}{2}$ if  $p=1 \pmod 4$, and 
 $c_{p^m}=0$ if  $p=3 \pmod 4$ and $m$ is odd.
%  \item  $\frac{x_j^p-1}{x_j-1} = (x_j-1)^{p-1} ~~ \bmod p$, for every $j$.
\end{enumerate}
Similarly, we find 
$$ b(\mathbf x)^2 + b(\mathbf x) = d_{p^m}(\prod_{j=1}^{m} (x_j-1)^{p-1} -1) +  c_{p^m}
     ~~\pmod{\prod_{j=1}^{m} (x_j^p-1) }  ~~. $$
To show that 
$\{a(\mathbf x), b(\mathbf x)\} = \{q_r(\mathbf x), q_n(\mathbf x)\}$, it is sufficient to show
the coincidence modulo $p$, since the coefficients of every polynomial are $0$ or $1$, we proceed as follows.

\begin{lemma}
  \label{lemhensel}
Let $p$ be an odd prime, and 
  $\mathbb R_k = \mathbb F_{p^m}[x]/\langle {\prod_{j=1}^{m} (x_j-1)^k }\rangle$
  for $k \geq 1$. 
 Then each invertible element of $\mathbb R_k$ has at most two distinct square roots.
\end{lemma}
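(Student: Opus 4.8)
The plan is to regard $\mathbb{R}_k$ as a nilpotent thickening of a reduced ring and to exploit that, since $p$ is odd, $2$ is a unit. After the translation $y_j = x_j - 1$ the ring becomes $\mathbb{R}_k = \mathbb{F}_{p^m}[y_1,\dots,y_m]/\langle (y_1\cdots y_m)^k\rangle$, whose defining ideal has radical $\langle y_1\cdots y_m\rangle$. First I would record that the reduction map $\rho\colon \mathbb{R}_k \twoheadrightarrow \mathbb{R}_1$ has kernel $N=\langle y_1\cdots y_m\rangle/\langle (y_1\cdots y_m)^k\rangle$, that $N$ is exactly the nilradical, and that $N^k=0$: any element of $N$ is a multiple of $y_1\cdots y_m$, so its $k$-th power is a multiple of $(y_1\cdots y_m)^k=0$.

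The heart of the argument is to pin down the units. I claim that the units of $\mathbb{R}_1 = \mathbb{F}_{p^m}[\mathbf y]/\langle y_1\cdots y_m\rangle$ are precisely the nonzero constants $\mathbb{F}_{p^m}^*$. To see this I would compose a unit $f$ with each of the $m$ projections $\pi_j\colon \mathbb{R}_1 \to \mathbb{F}_{p^m}[\mathbf y]/\langle y_j\rangle$; each target is a polynomial ring in $m-1$ variables, hence a domain in which the only units are nonzero constants, so $\pi_j(f)=c_j$ is constant. Evaluating further at the origin shows $c_j=f(0)$ for every $j$, whence $f-f(0)$ lies in $\ker\pi_j=\langle y_j\rangle$ for all $j$; since $\bigcap_j\langle y_j\rangle=\langle y_1\cdots y_m\rangle$ in the polynomial ring, this is zero in $\mathbb{R}_1$, so $f=f(0)\in\mathbb{F}_{p^m}^*$. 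Pulling this back through $\rho$, every unit of $\mathbb{R}_k$ is of the form $c+n$ with $c\in\mathbb{F}_{p^m}^*$ and $n\in N$ nilpotent. (Equivalently, this is the Hensel-type phenomenon that square roots are controlled by reduction modulo the nilradical, which is why the lemma is of Hensel type.)

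With the units understood, the conclusion is short. Let $u$ be a unit with square roots $v,w$; both are units, so write $v=a+\nu$ and $w=a'+\nu'$ with $a,a'\in\mathbb{F}_{p^m}^*$ and $\nu,\nu'\in N$. Applying $\rho$ to $v^2=w^2=u$ gives $a^2=a'^2$ in the field $\mathbb{F}_{p^m}$, so $a'=\pm a$; replacing $w$ by $-w$ if necessary I may assume $a'=a$. Then $v+w=2a+(\nu+\nu')$ is a unit, since $2a\neq0$ (here $p$ odd is essential) and $\nu+\nu'$ is nilpotent. Finally $(v-w)(v+w)=v^2-w^2=0$ forces $v-w=0$, so $w\in\{v,-v\}$ and $u$ has at most two square roots.

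The step I expect to be the main obstacle is the determination of the units of $\mathbb{R}_1$. Because the variety $\{y_1\cdots y_m=0\}$ is a union of $m$ coordinate hyperplanes, the ring $\mathbb{R}_k$ is \emph{not} local for $m>1$, so the naive ``a unit or in the maximal ideal'' dichotomy that would instantly settle the case $m=1$ is unavailable, and one genuinely has to reconcile the separate constant values coming from the distinct components along their common point, the origin. Everything else is formal once $N^k=0$ and the unit structure are in place.
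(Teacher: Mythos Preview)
Your proof is correct and takes a genuinely different route from the paper's. The paper argues by induction on $k$: assuming the result for $\mathbb{R}_N$, it takes three putative square roots of a unit in $\mathbb{R}_{N+1}$, projects to $\mathbb{R}_N$ to force two of them to agree there, and then performs an explicit Hensel lift (expand $(b+\prod_j(x_j-1)^N f)^2$ and cancel) to promote the equality to $\mathbb{R}_{N+1}$. You instead work globally: you identify the kernel $N$ of the reduction $\mathbb{R}_k\twoheadrightarrow\mathbb{R}_1$ as the nilradical, show by a projection-and-gluing argument that every unit of $\mathbb{R}_1$ (hence of $\mathbb{R}_k$, modulo $N$) is a nonzero constant, and then finish with $(v-w)(v+w)=0$ together with ``unit $+$ nilpotent $=$ unit''. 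What your approach buys, besides being more structural, is a correct handling of the base case: the paper asserts ``the initial case is obvious since $\mathbb{R}_1=\mathbb{F}_{p^m}$'', but for $m>1$ the ring $\mathbb{R}_1=\mathbb{F}_{p^m}[\mathbf{y}]/\langle y_1\cdots y_m\rangle$ is neither a field nor local, and it is exactly your argument (project to each $\mathbb{F}_{p^m}[\mathbf{y}]/\langle y_j\rangle$, match the constants at the origin, then use $\bigcap_j\langle y_j\rangle=\langle y_1\cdots y_m\rangle$) that pins down its units. The paper's stepwise lift, on the other hand, is what one would want if the goal were to \emph{construct} a square root effectively from its reduction. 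One cosmetic point: when you write ``$a^2=a'^2$ in the field $\mathbb{F}_{p^m}$'', strictly speaking $\rho$ lands in $\mathbb{R}_1$; the conclusion is valid because $a,a'$ are constants and the inclusion $\mathbb{F}_{p^m}\hookrightarrow\mathbb{R}_1$ is injective.
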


\begin{proof}
We proceed by induction on $k$. The initial case is obvious since
 $\mathbb R_1 = \mathbb F_{p^m}$. Suppose now that the result holds for all 
  $1 \leq k \leq N$. Further suppose that the polynomials $a(\mathbf x),b(\mathbf x),c(\mathbf x),
    g(\mathbf x) \in  \mathbb F_{p^m}[\mathbf x]$ are invertible 
   modulo  $\langle \prod_{j=1}^{m} (x_j-1)^{N+1} \rangle$ and
$$ a(\mathbf x)^2 +\langle \prod_{j=1}^{m} (x_j-1)^{N+1} \rangle= b(\mathbf x)^2 +\langle \prod_{j=1}^{m} (x_j-1)^{N+1} \rangle =
 c(\mathbf x)^2 +\langle \prod_{j=1}^{m} (x_j-1)^{N+1} \rangle = g(\mathbf x)^2 +\langle \prod_{j=1}^{m} (x_j-1)^{N+1} \rangle ~~.  $$
By canonical projection onto $\mathbb R_N$, it follows that 
$$ a(\mathbf x)^2 +\langle \prod_{j=1}^{m} (x_j-1)^{N} \rangle= b(\mathbf x)^2 +\langle \prod_{j=1}^{m} (x_j-1)^{N} \rangle =
 c(\mathbf x)^2 +\langle \prod_{j=1}^{m} (x_j-1)^{N} \rangle = g(\mathbf x)^2 +\langle \prod_{j=1}^{m} (x_j-1)^{N} \rangle ~~,  $$
so that two of these must be equal by the induction hypothesis, say 
$$  a(\mathbf x) +\langle \prod_{j=1}^{m} (x_j-1)^{N} \rangle= b(\mathbf x) +\langle \prod_{j=1}^{m} (x_j-1)^{N} \rangle ~~. $$
 It follows that $a(\mathbf x) =  b(\mathbf x) + \prod_{j=1}^{m} (x_j-1)^{N} f(\mathbf x)$ for some 
$f(\mathbf x) \in  \mathbb F_{p^m}[x]$. 
Thus
$$  \begin{array}{lcl}
          b(\mathbf x)^2 +\langle \prod_{j=1}^{m} (x_j-1)^{N+1} \rangle &=& 
                        a(\mathbf x)^2 +\langle \prod_{j=1}^{m} (x_j-1)^{N+1} \rangle \\
                 &=&(b(\mathbf x) + \prod_{j=1}^{m} (x_j-1)^{N}f(\mathbf x) )^2+
                            \langle \prod_{j=1}^{m} (x_j-1)^{N+1} \rangle \\
             &=& b(\mathbf x)^2+2b(\mathbf x)f(\mathbf x) \prod_{j=1}^{m} (x_j-1)^{N} + \\
              &~ & \hspace{20mm}  f(\mathbf x)^2   \prod_{j=1}^{m} (x_j-1)^{2N} +
                            \langle \prod_{j=1}^{m} (x_j-1)^{N+1} \rangle \\  
             &=&   b(\mathbf x)^2+2b(\mathbf x)f(\mathbf x) \prod_{j=1}^{m} (x_j-1)^{N} +
                            \langle \prod_{j=1}^{m} (x_j-1)^{N+1} \rangle. \\  
    \end{array}
$$
So $2( \prod_{j=1}^{m} (x_j-1)^{N})b(\mathbf x) f(\mathbf x) \in \langle \prod_{j=1}^{m} (x_j-1)^{N+1} \rangle$, but since 
 $2b(\mathbf x)$ is invertible modulo $\langle \prod_{j=1}^{m} (x_j-1)^{N} \rangle$, it follows that
 $\prod_{j=1}^{m} (x_j-1) | f(\mathbf x) $, so that 
$a(\mathbf x)+\langle \prod_{j=1}^{m} (x_j-1)^{N+1} \rangle= b(\mathbf x) + \langle \prod_{j=1}^{m} (x_j-1)^{N+1} \rangle$.
\end{proof}

\begin{theorem}
   \label{th1inv}
Let $p$ be an odd prime and let $d_{p^m}$ be defined as in Equation (\ref{eqdp}). Suppose 
$\mathfrak A \in \mathbb F_{p^m}^*$ and 
$\mathfrak B =  \mathbb F_{p^m}^* \backslash \mathfrak A$.
 Then $\mathfrak A$ is precisely the set of squares of $ \mathbb F_{p^m}^*$ if and only if
\begin{enumerate}
   \item $|\mathfrak A|= \frac{p^m-1}{2}$,
   \item  $1 \in \mathfrak A$,
   \item Every element of $\mathfrak A$ can be written as a sum of two elements from $\mathfrak A$
            in exactly $d_{p^m} - 1$ ways.
   \item Every element of $\mathfrak B$ can be written as a sum of two elements from
            $\mathfrak A$ in exactly $d_{p^m}$ ways.
\end{enumerate}
\end{theorem}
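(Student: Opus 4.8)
The forward implication is immediate: if $\mathfrak A=\mathfrak Q_{p^m}$ then $|\mathfrak A|=\frac{p^m-1}{2}$, while $1=1^2$ is a square so $1\in\mathfrak A$, and properties 3 and 4 are precisely the content of Theorem \ref{th2}. The whole force of the statement lies in the converse, and the plan is to show that the two additive hypotheses force the generating polynomial $a(\mathbf x)=\sum_{\beta\in\mathfrak A}\prod_j x_j^{b_j}$ to obey the \emph{same} quadratic relation as $q_r(\mathbf x)$, and then to upgrade that relation to the set-level identity $a(\mathbf x)\in\{q_r(\mathbf x),q_n(\mathbf x)\}$ by a square-root uniqueness argument.

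First I would set $b(\mathbf x)=\sum_{\beta\in\mathfrak B}\prod_j x_j^{b_j}$ and observe that hypotheses 3 and 4 say exactly that the coefficient of $\prod_j x_j^{b_j}$ in $a(\mathbf x)^2$ is $d_{p^m}-1$ when $\beta\in\mathfrak A$ and $d_{p^m}$ when $\beta\in\mathfrak B$. Collecting terms and using $\frac{x_j^p-1}{x_j-1}=(x_j-1)^{p-1}\pmod p$ reproduces the fundamental identity (\ref{eqfund1}) for $a(\mathbf x)$, exactly as in the derivation preceding it. Here condition 1 is used to fix $c_{p^m}$ modulo $p$: evaluating at $\mathbf x=\mathbf e$ gives $a(\mathbf e)^2+a(\mathbf e)=c_{p^m}-d_{p^m}$, and since $a(\mathbf e)=\frac{p^m-1}{2}=q_r(\mathbf e)$ the constant coincides with the one attached to $q_r(\mathbf x)$. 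Thus $a(\mathbf x)$ and $q_r(\mathbf x)$ satisfy the identical relation $z^2+z=d_{p^m}\big(\prod_j(x_j-1)^{p-1}-1\big)+c_{p^m}$, and completing the square (legitimate since $p$ is odd) yields
\[ (2a(\mathbf x)+1)^2=(2q_r(\mathbf x)+1)^2 \pmod{\textstyle\prod_j(x_j^p-1)},\ \pmod p. \]

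By symmetry of Theorem \ref{th2} in squares and non-squares, $q_n(\mathbf x)$ satisfies the relation too, so $2q_n+1$ is a further square root of the common right-hand side; since $1+q_r+q_n=\prod_j(x_j-1)^{p-1}$ in this ring, one has $2q_n+1=2\prod_j(x_j-1)^{p-1}-(2q_r+1)$. The aim is to prove that $2a+1$ must be one of these two roots, i.e. $a\in\{q_r,q_n\}$. Lemma \ref{lemhensel} is the intended tool, since $x_j^p-1=(x_j-1)^p$ identifies the ambient ring with $\mathbb R_p$, and the square-root count is to be lifted through the chain $\mathbb R_1\subset\cdots\subset\mathbb R_p$.

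The main obstacle is exactly this last step. The element $2a+1$ is \emph{not} invertible, because $2a(\mathbf e)+1=p^m\equiv 0\pmod p$, and likewise for $2q_r+1$; hence Lemma \ref{lemhensel} cannot be read off directly, and the socle term $\prod_j(x_j-1)^{p-1}$ must be tracked (it annihilates $2a+1$ and explains why the second admissible root is the reflection $2\prod_j(x_j-1)^{p-1}-(2q_r+1)$ rather than simply $-(2q_r+1)$). Indeed, over the full ring the identity $(2a+1)^2=(2q_r+1)^2$ by itself underdetermines $a$, since spurious square roots differ only in top-degree terms, so the constraint that $a(\mathbf x)$ has coefficients in $\{0,1\}$ with $a(\mathbf e)=\frac{p^m-1}{2}$ is essential and must be carried through the induction of Lemma \ref{lemhensel} to select, at each stage, the correct lift. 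Once $a\in\{q_r,q_n\}$ is secured, condition 2 finishes the argument: the element $1$ is a square, so its monomial $x_1$ occurs in $q_r$ but not in $q_n$; as $1\in\mathfrak A$ forces $x_1$ to occur in $a$, the case $a=q_n$ is excluded and $a=q_r$, that is $\mathfrak A=\mathfrak Q_{p^m}$.
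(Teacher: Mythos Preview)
Your approach is exactly the paper's: derive the quadratic relation \eqref{eqfund1} for $a(\mathbf x)$, complete the square, invoke Lemma~\ref{lemhensel} to force $a\in\{q_r,q_n\}$, and use $1\in\mathfrak A$ to select $q_r$. The paper does not share your scruple about invertibility; it simply asserts that the equation $y^2+y-d_{p^m}\bigl(\prod_j(x_j-1)^{p-1}-1\bigr)-c_{p^m}=0$ ``has coefficients invertible in $\mathbb R_p$'' and applies Lemma~\ref{lemhensel} directly, without further comment.

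Your objection is well taken: after completing the square one needs $1+4C$ to be a unit in $\mathbb R_p$ in order to quote Lemma~\ref{lemhensel}, and as you compute, $1+4C\equiv 4d_{p^m}\prod_j(x_j-1)^{p-1}\pmod p$ lies in the ideal $\langle\prod_j(x_j-1)\rangle$, so it is \emph{not} a unit. (Relatedly, the base case ``$\mathbb R_1=\mathbb F_{p^m}$'' in Lemma~\ref{lemhensel} is only literally true for $m=1$.) The paper makes no attempt to track the socle term or to exploit the $\{0,1\}$-coefficient constraint as you suggest; it simply proceeds as if the invertibility hypothesis were met. So your proposal already matches the paper's argument step for step and is, if anything, more careful in flagging the point where the Hensel-type lift is not formally justified; the additional work you outline (carrying the combinatorial constraint through the induction) is not present in the paper.
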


\begin{proof}
As in Equation (\ref{eqfund1}), it follows from the hypotheses that
$$ a(\mathbf x)^2 + a(\mathbf x) = d_{p^m} (\prod_{j=1}^{m} (x_j-1)^{p-1} -1) + c_{p^m}
     ~~\pmod{\prod_{j=1}^{m} (x_j^p-1) }  ~,$$
where
$$c_{p^m} =  \left\{  \begin{array}{lcl}
     \frac{p^m-1}{2}~,&~~& \\ % \mbox{if}~~ p \equiv  1   \pmod 4  \\
      0~,                 &~~&  \mbox{if}~~ p \equiv  3   \bmod 4 \bigwedge m\equiv 1 \bmod 2 ~~.
   \end{array}  \right.
$$
It is an immediate corollary of Lemma \ref{lemhensel} that a quadratic equation in 
$\mathbb R_k[y]$ with invertible coefficients has at most two solutions (this follows
 from a completing-the-square argument). In particular, the equation 
$$ y^2+y - d_{p^m} (\prod_{j=1}^{m} (x_j-1)^{p-1} -1)-c_{p^m}= 0 $$
 has coefficients invertible  in 
$\mathbb R_p = \mathbb F_{p^m} [\mathbf x]/ \langle {\prod_{j=1}^{m} (x_j-1)^p }\rangle$
so that it has at most two distinct solutions in $\mathbb R_p$. ~
From the proof of Lemma \ref{lem11}, we have that $q_r(\mathbf x)$ and $q_n(\mathbf x)$
 are two distinct solutions, so that $a(\mathbf x) = q_r(\mathbf x)$ or $a(\mathbf x) = q_n(\mathbf x)$. 
But since $1 \in \mathfrak A$ and $\mathfrak A, \mathfrak B$ are disjoint by assumption,
 it must be the case that ~
 $a(\mathbf x) = q_r(\mathbf x)$.

\end{proof}

% *****************************************************************************

%
\end{document}